\newtheorem{theorem}{Theorem}
\newtheorem{lemma}{Lemma}
\theoremstyle{definition}
\newtheorem{remark}{\bf Remark}
\let\wt\widetilde
\renewcommand{\d}{{\mathrm d}}
\newcommand{\ord}{\operatorname{ord}}
\newcommand{\Beta}{{\boldsymbol\eta}}
\newcommand{\lf}{\lfloor}
\newcommand{\rf}{\rfloor}
\newcommand{\LF}{\lfloor\kern-2.8pt\lfloor}
\newcommand{\RF}{\rfloor\kern-2.8pt\rfloor}
\newcommand{\biggLF}{\biggl\lfloor\kern-3.3pt\biggl\lfloor}
\newcommand{\biggRF}{\biggr\rfloor\kern-3.3pt\biggr\rfloor}
\begin{document}

\title[Arithmetic of Catalan's constant and its relatives]{Arithmetic of Catalan's constant\\and its relatives}

\author{Wadim Zudilin}
\address{Department of Mathematics, IMAPP, Radboud University, PO Box 9010, 6500~GL Nijmegen, Netherlands}
\urladdr{\url{http://www.math.ru.nl/~wzudilin}}

\date{18 April 2018. \emph{Revised}: 4 May 2019}

\subjclass[2010]{11J72, 11Y60, 33C20}

\keywords{Irrationality; Catalan's constant; Dirichlet's beta function; hypergeometric series}

\dedicatory{To Peter Bundschuh, with many irrational wishes, on the occasion of his 80th birthday}

\begin{abstract}
We prove that at least one of the six numbers $\beta(2i)$ for $i=1,\dots,6$ is irrational.
Here $\beta(s)=\sum_{k=0}^\infty(-1)^k(2k+1)^{-s}$ denotes Dirichlet's beta function, so that $\beta(2)$ is Catalan's constant.
\end{abstract}

\maketitle

\section{Introduction}
\label{s1}

In this note we discuss arithmetic properties of the values of Dirichlet's beta function
$$
\beta(s)=\sum_{n=1}^\infty\frac{\bigl(\frac{-4}n\bigr)}{n^s}=\sum_{k=0}^{\infty}\frac{(-1)^k}{(2k+1)^s}
$$
at positive even integers $s$. The very first such beta value $\beta(2)$ is famously known as Catalan's constant;
its irrationality remains an open problem, though we expect the number to be irrational and transcendental.
The best known results in this direction were given by T.~Rivoal and this author in \cite{RZ03}.
Namely, we showed that at least one of the seven numbers $\beta(2),\beta(4),\dots,\beta(14)$ is irrational,
and that there are infinitely many irrational numbers among the even beta values $\beta(2),\beta(4),\beta(6),\dots$\,.
Here we use a variant of the method from \cite{KZ18,Zu18} to improve slightly on the former achievement;
a significant strengthening towards the infinitude result, based on a further development of the ideas in \cite{FSZ18,Sp18},
is a subject of the recent preprint \cite{Fi18} of S.~Fischler.

\begin{theorem}
\label{th1}
At least one of the six numbers
$$
\beta(2), \; \beta(4), \; \beta(6), \; \beta(8), \; \beta(10), \; \beta(12)
$$
is irrational.
\end{theorem}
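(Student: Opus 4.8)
The plan is to construct, for every positive integer $n$, a linear form
\[
r_n=a_{0,n}+a_{1,n}\beta(2)+a_{2,n}\beta(4)+a_{3,n}\beta(6)+a_{4,n}\beta(8)+a_{5,n}\beta(10)+a_{6,n}\beta(12)
\]
with rational coefficients $a_{j,n}$ which is abnormally small in comparison with the denominators of its coefficients; by a classical argument the existence of such a sequence of forms rules out the simultaneous rationality of $\beta(2),\dots,\beta(12)$, which is exactly Theorem~\ref{th1}.

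The form is produced, following \cite{RZ03} with the hypergeometric machinery of \cite{KZ18,Zu18}, from a rational function $R_n(t)$ of hypergeometric type: a ratio of products of Pochhammer symbols in $t$ with half-integer shifts, times a polynomial in $t$ and a power of $n!$, chosen so that (i)~all poles of $R_n$ are located at half-integers and have order at most $12$; (ii)~$R_n$ is ``very-well-poised'', i.e.\ invariant under a reflection $t\mapsto c_n-t$ up to an explicit sign; and (iii)~$R_n$ decays at infinity fast enough for the series below to converge. One then puts
\[
r_n=\sum_{k\ge0}(-1)^kR_n(k).
\]
Decomposing $R_n$ into partial fractions and invoking $\beta(s)=\sum_{k\ge0}(-1)^k(2k+1)^{-s}$, summation of each resulting term against $(-1)^k$ over $k\ge0$ produces a rational multiple of some $\beta(s)$ with $1\le s\le12$ plus a rational number; hence $r_n$ lies in the $\mathbb Q$-span of $1,\beta(1),\dots,\beta(12)$, and the symmetry~(ii) forces the coefficients of the odd values $\beta(1),\beta(3),\dots,\beta(11)$ — each a rational multiple of a power of $\pi$ — to vanish, leaving precisely the six even beta values above. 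In parallel one represents $r_n$ as a complex contour integral of Barnes type, which will govern its asymptotics.

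Everything then hinges on three estimates. \emph{Size of $r_n$}: a saddle-point analysis of the Barnes integral gives $\frac1n\log|r_n|\to-C_1$ for an explicit $C_1>0$, along with the leading term of the asymptotic expansion of $r_n$, which is nonzero; so $r_n\neq0$ for all large $n$. \emph{Denominators}: one checks that $\Phi_n a_{j,n}\in\mathbb Z$ for all $j$, where $\Phi_n=2^{un}\operatorname{lcm}(1,3,5,\dots,2n+1)^{v}/\Psi_n$ for fixed integers $u,v$ and a ``denominator-saving'' integer $\Psi_n$; by the prime number theorem $\operatorname{lcm}(1,3,\dots,2n+1)=e^{2n+o(n)}$, whereas a $p$-adic valuation analysis of the Pochhammer products occurring in $R_n$ — in the manner of Chudnovsky and Rhin--Viola — shows that $\Psi_n$ absorbs the contribution of every prime $p$ lying in certain intervals $(\lambda n,\mu n]$, so that $\frac1n\log\Phi_n\to C_0$ with $C_0$ strictly below its naive value. \emph{Balance}: the free parameters entering $R_n$ are tuned so that $C_0<C_1$; it is this refined bookkeeping that reduces the count from the seven beta values of \cite{RZ03} to six.

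Granting the estimates, the conclusion is immediate. The form $\Phi_nr_n=A_{0,n}+\sum_{j=1}^6A_{j,n}\beta(2j)$ has integer coefficients $A_{j,n}$, and $0<|\Phi_nr_n|=e^{(C_0-C_1)n+o(n)}\to0$. If $\beta(2),\beta(4),\dots,\beta(12)$ were all rational with common denominator $Q$, then $Q\Phi_nr_n$ would be a nonzero rational integer of absolute value $<1$ for every sufficiently large $n$, which is absurd. Hence at least one of the six numbers is irrational. I expect the main obstacle to be the \emph{balance} step: forcing $C_0<C_1$ with only six numbers in play demands both an astute choice of $R_n$, exploiting the full symmetry group of the very-well-poised series to enlarge the decay rate $C_1$, and a sharp estimate of the $p$-adic valuations of the Pochhammer products to shrink the arithmetic cost $C_0$; verifying $r_n\neq0$ cleanly is the other recurrent technical point.
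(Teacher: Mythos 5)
Your outline reproduces the architecture of the paper's proof: a very-well-poised hypergeometric rational function with half-integer poles, the alternating sum $r_n=\sum_\nu(-1)^\nu R_n(\nu)$, partial fractions to land in $\mathbb Q+\sum_i\mathbb Q\,\beta(i)$, the reflection symmetry $R_n(t)=R_n(-t-c_n)$ killing the odd $\beta$'s (the paper needs, and arranges, that the reflection centre has the right parity for this sign argument to work --- a detail you gloss over), and a denominator-saving factor $\Phi_n$ extracted by $p$-adic analysis of the binomial/Pochhammer coefficients. Two points where you diverge are worth noting. First, you propose a Barnes contour integral and a saddle-point analysis to get both the size of $r_n$ and its nonvanishing; the paper instead uses a real Euler-type multiple integral over $[0,1]^s$ (Lemma~\ref{lem2}), which gives $r_n>0$ for free and reduces the asymptotics to maximizing an explicit function on the cube --- a considerably cleaner route to the ``recurrent technical point'' you flag. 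Second, your denominator is built from $\operatorname{lcm}(1,3,\dots,2n+1)$, whereas the paper works with $d_M=\operatorname{lcm}(1,\dots,M)$ for $M=11n$ after clearing the half-integer shifts by powers of $2$; this is cosmetic but affects the bookkeeping.

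The genuine gap is the step you yourself call the ``balance'': you never exhibit parameters for which $C_0<C_1$, and that inequality is the entire content of the theorem at the level of \emph{six} numbers rather than seven or eight. The method as you describe it proves only that \emph{some} initial segment $\beta(2),\dots,\beta(2k)$ contains an irrational number, with $k$ determined by whatever balance one can actually achieve; the naive symmetric choice of parameters (the paper's Section~\ref{s2}, with $s=17$) already requires eight beta values even after the $\Phi_n$ saving. Getting down to six forces the asymmetric tuning $s=13$, $\boldsymbol\eta=(31,10,10,10,10,10,11,11,11,11,12,12,12,12)$, $M=11n$, together with the explicit computation of the piecewise-constant exponent function $\varphi_0$ and the verification that $\lim r_n^{1/n}=e^{-100.7396\dots}$ beats $\lim(\Phi_n^{-1}d_M^{13})^{1/n}=e^{100.2335\dots}$. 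Without producing such a parameter choice and the two numerical limits, the proof of Theorem~\ref{th1} is not complete; everything else in your sketch is standard and correct.
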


In Section~\ref{s2} we illustrate principal ingredients of the method in a particularly simple situation;
this leads to a weaker version of Theorem~\ref{th1}, namely, to the irrationality of at least one number $\beta(2i)$ for $i=1,\dots,8$. The details about the general construction of approximating forms to even beta values and our proof of Theorem~\ref{th1}
are given in Section~\ref{s3}.

\section{Outline of the construction}
\label{s2}

For an odd integer $s\ge3$ (which we eventually set to be 17) and even $n>0$, define the rational function
\begin{equation}
R_n(t)=\frac{2^{6n}n!^{s-3}\,(2t+n)\,\prod_{j=1}^{3n}(t-n+j-\frac12)}{\prod_{j=0}^n(t+j)^s}
\label{rat1}
\end{equation}
and assign to it the related sequence of quantities
\begin{equation}
r_n=\sum_{\nu=1}^\infty R_n(\nu-\tfrac12)(-1)^\nu
=\sum_{\nu=n+1}^\infty R_n(\nu-\tfrac12)(-1)^\nu.
\label{sum1}
\end{equation}
The sums $r_n$ are instances of generalized hypergeometric functions, for which we can use some standard integral representations to write
$$
r_n=\frac{2^{6n}(3n+1)!}{n!^3}\idotsint_{[0,1]^s}
\frac{(1-t_1t_2\dotsb t_s)\,\prod_{j=1}^st_j^{n-1/2}(1-t_j)^n\,\d t_j}
{(1+t_1t_2\dotsb t_s)^{3n+2}}
$$
(details are given in Lemma~\ref{lem2} below). This form clearly implies that $r_n>0$ and also gives access to the asymptotics
\begin{equation}
\lim_{n\to\infty}r_n^{1/n}
=2^63^3\max_{\boldsymbol t\in[0,1]^s}\frac{\prod_{j=1}^st_j(1-t_j)}{(1+t_1t_2\dotsb t_s)^3}
=12^3\max_{0<t<1}\frac{t^s(1-t)^s}{(1+t^s)^3}.
\label{asymp1}
\end{equation}
An important ingredient of the construction is the following decomposition of the quantities $r_n$.

\begin{lemma}
\label{lem1}
For odd $s$ and even $n$ as above,
$$
r_n=\sum_{\substack{i=1\\ i\;\text{even}}}^sa_i\beta(i)+a_0,
$$
where $a_i=a_{i,n}$ satisfy the inclusions $\Phi_n^{-1}d_n^{s-i}a_i\in\mathbb Z$ for $i=0,1,\dots,s$ even.
Here $d_n$ denotes the least common multiple of the numbers $1,2,\dots,n$, and
$$
\Phi_n=\prod_{2\sqrt n<p\le n}p^{\varphi_0(n/p)}
\qquad\text{with}\quad
\varphi_0(x)=\begin{cases}
0 & \text{if}\;\; 0\le\{x\}<\tfrac13, \\[2pt]
1 & \text{if}\;\; \tfrac13\le\{x\}<\tfrac12, \\[2pt]
2 & \text{if}\;\; \tfrac12\le\{x\}<1,
\end{cases}
$$
the product taken over primes.
\end{lemma}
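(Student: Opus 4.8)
The plan is to perform a partial-fraction decomposition of $R_n(t)$ with respect to the variable $t$ and then evaluate the sum $r_n=\sum_{\nu=n+1}^\infty R_n(\nu-\tfrac12)(-1)^\nu$ term by term. Since the denominator of $R_n(t)$ is $\prod_{j=0}^n(t+j)^s$, we can write
$$
R_n(t)=\sum_{j=0}^n\sum_{k=1}^s\frac{a_{j,k}}{(t+j)^k},
$$
where the coefficients $a_{j,k}$ are rational numbers obtained from derivatives of $(t+j)^sR_n(t)$ at $t=-j$. First I would compute these coefficients explicitly: the key identity is
$$
a_{j,k}=\frac{1}{(s-k)!}\,\frac{\d^{s-k}}{\d t^{s-k}}\Bigl((t+j)^sR_n(t)\Bigr)\bigg|_{t=-j},
$$
and since $(t+j)^sR_n(t)$ is, up to the constant $2^{6n}n!^{s-3}$, a product of linear factors $(2t+n)\prod(t-n+i-\tfrac12)$ divided by $\prod_{i\neq j}(t+i)^s$, its logarithmic derivative is a sum of terms $\pm(t+\text{const})^{-1}$, so the Leibniz/Faà di Bruno expansion produces $a_{j,k}$ as $2^{6n}n!^{s-3}$ times a polynomial in values $\sum(\cdots)^{-m}$. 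Tracking $2$-adic and $p$-adic valuations of these expressions — using that $n!^{s-3}$ in the numerator cancels the repeated factors $(t+i)^{-s}$ and that half-integer shifts contribute the powers of $2$ — gives the denominator bound $d_n^{s-k}$, and the more delicate factor $\Phi_n^{-1}$ arises exactly as in \cite{KZ18,Zu18}: for a prime $p$ with $2\sqrt n<p\le n$ the factor $(2t+n)$ and the half-integer product over an interval of length $3n$ force extra divisibility governed by the fractional part of $n/p$, which is what the function $\varphi_0$ records.

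Next I would substitute $t=\nu-\tfrac12$ and sum against $(-1)^\nu$. Reindexing $\nu\mapsto\nu+j$ in each inner sum, I get, for each pair $(j,k)$,
$$
\sum_{\nu=n+1}^\infty\frac{(-1)^\nu}{(\nu-\tfrac12+j)^k}
=(-1)^j\!\!\sum_{\mu>n+1-j\;\text{or so}}\frac{(-1)^\mu}{(\mu-\tfrac12)^k}
=(-1)^j\,2^k\!\!\sum_{\mu}\frac{(-1)^\mu}{(2\mu-1)^k},
$$
and the tail $\sum_{\mu=1}^\infty(-1)^\mu(2\mu-1)^{-k}=-\beta(k)$ for $k\geq2$ (and the $k=1$ term telescopes into a rational Leibniz-type partial sum, contributing only to $a_0$). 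Collecting: the full sum equals a rational number (the finite partial sums removed, plus the contributions of odd $k$, which by the change of variables produce only rationals since $\beta$ at odd arguments does not appear — here one uses the symmetry forcing the odd-$k$ pieces to cancel or rather that we only pick up $\beta(k)$ for $k$ even because $\sum(-1)^\mu(2\mu-1)^{-k}$ with $k$ odd is $-\beta(k)$ too, so actually one must check the coefficients of $\beta(k)$ for odd $k\geq3$ vanish) plus $\sum_{k\ \text{even}}(\text{coefficient})\beta(k)$. The vanishing of the odd-index coefficients is the standard consequence of the palindromic symmetry $R_n(-t-n)=-R_n(t)$ coming from the factor $(2t+n)$ together with the reflection $t\mapsto -t-n$ on the product of half-integers; this symmetry swaps $a_{j,k}$ with $\pm a_{n-j,k}$ and kills the odd-$k$ sums after the alternating summation.

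I expect the main obstacle to be the precise $p$-adic valuation bookkeeping that yields the factor $\Phi_n^{-1}$: one must show that for each prime $p\in(2\sqrt n,\,n]$ the quantity $d_n^{s-i}a_i$ is divisible by $p^{\varphi_0(n/p)}$, which requires a careful local analysis at $t\equiv -j\pmod p$ of how many factors $(t+i)$ in the denominator, respectively half-integer factors $(t-n+i-\tfrac12)$ in the numerator, are divisible by $p$, as a function of $\{n/p\}$; this is exactly the combinatorial lemma underlying \cite{KZ18,Zu18} and I would invoke or adapt it rather than reprove it. The remaining steps — the explicit partial-fraction coefficients, the evaluation of the alternating tails, and the symmetry argument eliminating odd beta values — are routine once set up, so the write-up should foreground the valuation estimate and treat the rest as standard manipulation with generalized hypergeometric series.
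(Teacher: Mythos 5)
Your skeleton (partial fractions in $t$, termwise evaluation of the alternating sum, symmetry under $t\mapsto-t-n$ to kill odd-index coefficients, $p$-adic analysis of the coefficients for the factor $\Phi_n$) is the same as the paper's, and the arithmetic of the $a_{i,k}$ themselves is handled in essentially the way you describe (the paper factors $R_n$ into a product of elementary integer-valued bricks and quotes \cite{Zu18} and \cite{RZ03} rather than differentiating, but that is a matter of taste). However, there is one genuine gap, and it sits exactly at the point you dismiss as ``routine'': the rational term $a_0$.

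If you start the summation at $\nu=n+1$ (or $\nu=1$) and reindex each tail $\sum_\nu(-1)^\nu(\nu+k-\frac12)^{-i}$ to $\sum_{\ell\ge k}(-1)^\ell(\ell+\frac12)^{-i}=2^i\beta(i)-\sum_{\ell=0}^{k-1}(-1)^\ell(\ell+\frac12)^{-i}$, the finite correction sums contain denominators $(2\ell+1)^i$ with $2\ell+1$ as large as $2n-1$ (with your shift $\mu=\nu+j$ it is even worse, up to about $4n$). That only yields $d_{2n}^s a_0\in\mathbb Z$, which is precisely the weaker bound of \cite{RZ03} discussed in Remark~\ref{rem1}, not the claimed $d_n^s a_0\in\mathbb Z$; and $d_n^s$ versus $d_{2n}^s$ is the whole point of this construction. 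The missing idea is that the numerator $\prod_{j=1}^{3n}(t-n+j-\frac12)$ makes $R_n(\nu-\frac12)$ vanish for all integers $\nu$ with $-2n<\nu-\frac12<n$, so one may start the summation at the \emph{symmetric} point $\nu=-m-1$ with $m=n/2$ (this is where the hypothesis that $n$ is even is used). After that shift every finite correction sum runs over $\ell$ with $k-m\le\ell\le-1$ or $0\le\ell\le k-m-1$, hence $|2\ell+1|\le n-1$, and $d_{n-1}^i$ clears its denominator. Without this re-centering the lemma as stated is not proved. Two smaller points: the symmetry is $R_n(-t-n)=R_n(t)$ with a plus sign (for $n$ even and $s$ odd the sign $-1$ from $2t+n$ is cancelled by $(-1)^{(n+1)s}$ from the denominator), giving $a_{i,k}=(-1)^ia_{i,n-k}$ and hence the vanishing of $a_i$ for $i$ \emph{odd}; with your sign the even coefficients would vanish instead. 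And the simple-pole contribution does not ``telescope into a rational'': it would produce $\beta(1)=\pi/4$, and is eliminated only by that same symmetry.
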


Note that from the prime number theorem we deduce the asymptotics
$$
\lim_{n\to\infty}d_n^{1/n}=e
\qquad\text{and}\qquad
\lim_{n\to\infty}\Phi_n^{1/n}=e^{\varkappa},
$$
where
$$
\varkappa=\bigl(\psi(\tfrac12)-\psi(\tfrac13)-1\bigr)+2\cdot\bigl(\psi(1)-\psi(\tfrac12)-1\bigr)
=0.9411124762\dots,
$$
the function $\psi(x)$ denotes the logarithmic derivative of the gamma function.

\begin{remark}
\label{rem1}
The analogous construction in \cite{RZ03} makes use of a slightly different rational function than \eqref{rat1}, namely, of
$$
\wt R_n(t)=\frac{2^{4n}n!^{s-2}\,(2t+n)\,\prod_{j=1}^n(t-n+j-\frac12)\,\prod_{j=1}^n(t+n+j-\frac12)}{\prod_{j=0}^n(t+j)^s},
$$
so that
$$
R_n(t)=\wt R_n(t)\cdot\frac{2^{2n}\prod_{j=1}^n(t+j-\frac12)}{n!}.
$$
The analogous decomposition of a related quantity $\wt r_n$ assumes the form
$$
\wt r_n=\sum_{\substack{i=1\\ i\;\text{even}}}^s\wt a_i\beta(i)+\wt a_0,
$$
in which the rational coefficients $\wt a_i=\wt a_{i,n}$ satisfy $\Phi_n^{-1}d_n^{s-i}\wt a_i\in\mathbb Z$ for $i=1,\dots,s$ even, but $\Phi_n^{-1}d_{2n}^s\wt a_0\in\mathbb Z$. The appearance of $d_{2n}^s$ as the common denominator in place of $d_n^s$ changes the scene drastically and leads to weaker arithmetic applications.
\end{remark}

\begin{proof}[Proof of Lemma~\textup{\ref{lem1}}]
Following the strategy in \cite{RZ18,Zu18} we can write the function \eqref{rat1} as sum of partial fractions,
$$
R_n(t)=\sum_{i=1}^s\sum_{k=0}^n\frac{a_{i,k}}{(t+k)^i},
$$
in which $\Phi_n^{-1}d_n^{s-i}a_{i,k}\in\mathbb Z$ for all $i$ and $k$.
Indeed, the rational function \eqref{rat1} is a product of simpler ones
\begin{align*}
\frac{n!}{\prod_{j=0}^n(t+j)}
&=\sum_{k=0}^n\frac{(-1)^k\binom nk}{t+k},
\displaybreak[2]\\
\frac{2^{2n}\prod_{j=1}^n(t-n+j-\frac12)}{\prod_{j=0}^n(t+j)}
&=\sum_{k=0}^n\frac{(-1)^{n+k}\binom{2n+2k}{2n}\binom{2n}{n+k}}{t+k},
\displaybreak[2]\\
\frac{2^{2n}\prod_{j=1}^n(t+j-\frac12)}{\prod_{j=0}^n(t+j)}
&=\sum_{k=0}^n\frac{\binom{2k}k\binom{2n-2k}{n-k}}{t+k},
\displaybreak[2]\\
\frac{2^{2n}\prod_{j=1}^n(t+n+j-\frac12)}{\prod_{j=0}^n(t+j)}
&=\sum_{k=0}^n\frac{(-1)^k\binom{4n-2k}{2n}\binom{2n}k}{t+k}
\end{align*}
and $2t+n$; the inclusions $d_n^{s-i}a_{i,k}\in\mathbb Z$ then follow from \cite[Lemma~1]{Zu18}.
The cancellation by the factor $\Phi_n$ originates from the $p$-adic analysis of the binomial factors entering
$$
a_{s,k}=\frac{(2n+2k)!\,(4n-2k)!}{(n+k)!\,(2n-k)!\,k!^3(n-k)!^3}\cdot(n-2k){\binom nk}^{s-3}
\quad\text{for} \; k=0,1,\dots,n,
$$
and the estimate $\ord_pa_{i,k}\ge-(s-i)+\ord_pa_{s,k}\ge-(s-i)+\varphi(n/p,k/p)$ for primes in the range $2\sqrt n<p\le n$,
where
$$
\varphi(x,y)=\lf 2x+2y\rf+\lf 4x-2y\rf-\lf x+y\rf-\lf 2x-y\rf-3\lf y\rf-3\lf x-y\rf
$$
is a periodic function of period 1 in both $x$ and $y$, and from the inequality
$$
\varphi(x,y)\ge\min_{y\in\mathbb R}\varphi(x,y)=\min_{0\le y<1}\varphi(x,y)=\varphi_0(x);
$$
the details can be borrowed from~\cite[Section~7]{RZ03}.
Furthermore, the property $R_n(-t-\nobreak n)=R_n(t)$ derived from \eqref{rat1} implies $a_{i,k}=(-1)^ia_{i,n-k}$ for $i=1,\dots,s$ and $k=0,1,\dots,n$.

Recall that $n$ is even, so that $n/2=m$ is a positive integer.
The summation over $\nu$ in \eqref{sum1} can also start from $-m-1$ (rather than $1$ or $n+1$), because
the function $R_n(t)$ vanishes at all half-integers between $-2n$ and $n$. Therefore,
\begin{align*}
r_n
&=\sum_{\nu=-m-1}^\infty R_n(\nu-\tfrac12)(-1)^\nu
=\sum_{i=1}^s\sum_{k=0}^na_{i,k}\sum_{\nu=-m-1}^\infty\frac{(-1)^\nu}{(\nu+k-\frac12)^i}
\\
&=\sum_{i=1}^s\sum_{k=0}^n(-1)^{k-1}a_{i,k}
\sum_{\ell=k-m}^\infty\frac{(-1)^\ell}{(\ell+\frac12)^i}
\displaybreak[2]\\
&=\sum_{i=1}^s2^i\beta(i)\sum_{k=0}^n(-1)^{k-1}a_{i,k}
+\sum_{i=1}^s\sum_{k=0}^{m-1}(-1)^{k-1}a_{i,k}\sum_{\ell=k-m}^{-1}\frac{(-1)^\ell}{(\ell+\frac12)^i}
\\ &\qquad
-\sum_{i=1}^s\sum_{k=m+1}^n(-1)^{k-1}a_{i,k}\sum_{\ell=0}^{k-m-1}\frac{(-1)^\ell}{(\ell+\frac12)^i},
\end{align*}
where the rules
$$
\sum_{\ell=k-m}^\infty=\sum_{\ell=0}^\infty-\sum_{\ell=0}^{k-m-1} \quad\text{if}\; k>m
\qquad\text{and}\qquad
\sum_{\ell=k-m}^\infty=\sum_{\ell=0}^\infty+\sum_{\ell=k-m}^{-1} \quad\text{if}\; k<m
$$
were applied. Thus, the rational numbers
$$
a_i=2^i\sum_{k=0}^n(-1)^{k-1}a_{i,k} \quad\text{for}\; i=1,\dots,s
$$
satisfy $\Phi_n^{-1}d_n^{s-i}a_i\in\mathbb Z$, while for the quantity
$$
a_0=\sum_{i=1}^s\sum_{k=0}^{m-1}(-1)^{k-1}a_{i,k}\sum_{\ell=k-m}^{-1}\frac{(-1)^\ell}{(\ell+\frac12)^i}
-\sum_{i=1}^s\sum_{k=m+1}^n(-1)^{k-1}a_{i,k}\sum_{\ell=0}^{k-m-1}\frac{(-1)^\ell}{(\ell+\frac12)^i}
$$
the inclusion $\Phi_n^{-1}d_n^sa_0\in\mathbb Z$ follows from noticing that
\begin{align*}
d_{n-1}^i\sum_{\ell=k-m}^{-1}\frac{(-1)^\ell}{(\ell+\frac12)^i}&\in\mathbb Z \quad\text{if}\; 0\le k<m,
\\
d_{n-1}^i\sum_{\ell=0}^{k-m-1}\frac{(-1)^\ell}{(\ell+\frac12)^i}&\in\mathbb Z \quad\text{if}\; m<k\le n.
\end{align*}

Finally,
\begin{align*}
a_i=2^i\sum_{k=0}^n(-1)^{k-1}a_{i,k}
&=(-1)^i2^i\sum_{k=0}^n(-1)^{k-1}a_{i,n-k}
\\
&=(-1)^i2^i\sum_{k=0}^n(-1)^{n-k-1}a_{i,n-k}
=(-1)^ia_i,
\end{align*}
so that $a_i$ vanish for odd $i$.
\end{proof}

Set now $s=17$, in which case we compute from \eqref{asymp1} that
$$
\lim_{n\to\infty}r_n^{1/n}=e^{-16.1123070755\dots},
$$
hence the linear forms
$$
\Phi_n^{-1}d_n^{17}r_n\in\mathbb Z\beta(2)+\mathbb Z\beta(4)+\dots+\mathbb Z\beta(16)+\mathbb Z
$$
are positive and tend to 0 as $n\to\infty$. This implies that the eight numbers $\beta(2),\beta(4),\dots,\beta(16)$ cannot
be all rational.

\section{General settings}
\label{s3}

A natural way to generalize the construction in Section~\ref{s2} follows the recipe of \cite{RZ03} and \cite{Zu04}.

For an odd integer $s\ge5$, consider a collection $\Beta=(\eta_0,\eta_1,\dots,\eta_s)$ of integral parameters satisfying the conditions
\begin{equation*}
0<\eta_j<\frac12\eta_0
\quad\text{for}\; j=1,\dots,s
\qquad\text{and}\qquad
\eta_1+\eta_2+\dots+\eta_s\le\frac{s-1}{2}\eta_0,
\end{equation*}
to which we assign, for each positive integer $n$, the collection
\begin{equation*}
h_0=\eta_0n+1,\quad
h_j=\eta_jn+\tfrac12 \quad\text{for}\; j=1,\dots,s.
\end{equation*}
In what follows, we assume that $h_0-1=\eta_0n$ is even\,---\,the condition that is automatically achieved when $\eta_0\in2\mathbb Z$, otherwise by restricting to even $n$.

Define the rational function
\begin{equation}
R_n(t)=R_{n,\Beta}(t)
=\gamma_n\cdot(2t+h_0)\,
\frac{(t+1)_{h_0-1}}{\prod_{j=1}^s(t+h_j)_{1+h_0-2h_j}}=R_n(-t-h_0),
\label{rat2}
\end{equation}
where
$$
\gamma_n=4^{h_0-1}\,\frac{\prod_{j=2}^s(h_0-2h_j)!}{(h_1-\frac12)!^2},
$$
and the (very-well-poised) hypergeometric sum
\begin{align}
r_n
&=r_{n,\Beta}
=\sum_{\nu=0}^\infty R_n(\nu)(-1)^\nu
\nonumber\\
&=\gamma_n
\cdot\frac{\Gamma(1+h_0)\,\prod_{j=1}^q\Gamma(h_j)}
{\prod_{j=1}^q\Gamma(1+h_0-h_j)}
\,{}_{s+2}F_{s+1}
\biggl(\begin{matrix}
h_0, \; 1+\tfrac12h_0, \; h_1, \; \dots,\; h_s \\[1pt]
\tfrac12h_0, \, 1+h_0-h_1, \, \dots, \, 1+h_0-h_s \end{matrix}\biggm|-1\biggr).
\label{eq:5}
\end{align}
Then \cite[Lemma 1]{RZ03} implies the following Euler-type integral representation of~$r_n$
(see also \cite[Lemma 3]{RZ03}).

\begin{lemma}
\label{lem2}
The formula
\begin{align*}
r_n
&=\frac{4^{h_0-1}\Gamma(1+h_0)}{\Gamma(h_1+\frac12)^2\,\Gamma(1+h_0-2h_1)}
\\ &\qquad\times
\idotsint_{[0,1]^s}
\frac{\prod_{j=1}^st_j^{h_j-1}(1-t_j)^{h_0-2h_j}}
{(1+t_1t_2\dotsb t_s)^{h_0}}\,
\frac{1-t_1t_2\dotsb t_s}{1+t_1t_2\dotsb t_s}
\,\d t_1\,\d t_2\dotsb\d t_s
\end{align*}
is valid. In particular, $r_n>0$ and
$$
\lim_{n\to\infty}r_n^{1/n}=\frac{(4\eta_0)^{\eta_0}}{\eta_1^{2\eta_1}(\eta_0-2\eta_1)^{\eta_0-2\eta_1}}
\cdot\max_{\boldsymbol t\in[0,1]^s}\frac{\prod_{j=1}^st_j^{\eta_j}(1-t_j)^{\eta_0-2\eta_j}}{(1+t_1t_2\dotsb t_s)^{\eta_0}}.
$$
\end{lemma}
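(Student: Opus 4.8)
The plan is to unfold $r_n=\sum_{\nu\ge0}R_n(\nu)(-1)^\nu$ into the $s$-fold Euler integral by turning each Pochhammer-symbol ratio appearing in $R_n(\nu)$ into a Beta integral and then summing the residual $\nu$-series in closed form; this is precisely the mechanism behind \cite[Lemmas~1 and~3]{RZ03}. Evaluating the rational function \eqref{rat2} at $t=\nu\in\mathbb Z_{\ge0}$ and rewriting its Pochhammer factors through $\Gamma$-values shows that $R_n(\nu)$ equals a $\nu$-free constant $C_n=\gamma_n\,\Gamma(h_0)\prod_{j=1}^s\frac{\Gamma(h_j)}{\Gamma(1+h_0-h_j)}$ times
$$
(2\nu+h_0)\,\frac{(h_0)_\nu}{\nu!}\prod_{j=1}^s\frac{(h_j)_\nu}{(1+h_0-h_j)_\nu};
$$
once $C_n$ is reabsorbed into the prefactor of \eqref{eq:5} this is exactly the general term of the very-well-poised ${}_{s+2}F_{s+1}(-1)$ there, through the identity $(1+\tfrac12h_0)_\nu/(\tfrac12h_0)_\nu=(h_0+2\nu)/h_0$. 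For each $j=1,\dots,s$, and this is where the integrality of $h_0-2h_j=(\eta_0-2\eta_j)n\ge0$ is used, the Beta integral gives
$$
\frac{(h_j)_\nu}{(1+h_0-h_j)_\nu}=\frac{\Gamma(1+h_0-h_j)}{\Gamma(h_j)\,\Gamma(1+h_0-2h_j)}\int_0^1t_j^{h_j+\nu-1}(1-t_j)^{h_0-2h_j}\,\d t_j.
$$

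Substituting these $s$ representations, pulling the powers $t_j^\nu$ out, and interchanging $\sum_\nu$ with $\idotsint_{[0,1]^s}$ reduces $r_n$ to a single integral whose only $\nu$-dependence is the series $\sum_{\nu\ge0}(2\nu+h_0)\frac{(h_0)_\nu}{\nu!}(-t_1t_2\dotsb t_s)^\nu$. The interchange is legitimate by Tonelli's theorem: under the hypotheses $0<\eta_j<\tfrac12\eta_0$ and $\eta_1+\dots+\eta_s\le\tfrac{s-1}2\eta_0$ the rational function \eqref{rat2} satisfies $\deg_tR_n(t)=h_0-\sum_{j=1}^s(1+h_0-2h_j)\le1-s\le-4$, so $R_n(\nu)=O(\nu^{-4})$ and the nonnegative series obtained by discarding the sign $(-1)^\nu$ converges. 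Differentiating the binomial series yields, for $|x|<1$,
$$
\sum_{\nu=0}^\infty(2\nu+h_0)\frac{(h_0)_\nu}{\nu!}\,x^\nu=h_0(1-x)^{-h_0}+2h_0\,x\,(1-x)^{-h_0-1}=h_0(1+x)(1-x)^{-h_0-1},
$$
so with $x=-t_1t_2\dotsb t_s$ the inner series equals $h_0(1-t_1t_2\dotsb t_s)(1+t_1t_2\dotsb t_s)^{-h_0-1}$, exactly the kernel of the claimed integral apart from the scalar $h_0$.

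It remains to see that the constants assemble into the stated prefactor. Multiplying $C_n$ by the $s$ Beta-integral constants cancels the products $\prod_j\Gamma(h_j)$ and $\prod_j\Gamma(1+h_0-h_j)$ and leaves $\gamma_n\,\Gamma(h_0)\prod_{j=1}^s\Gamma(1+h_0-2h_j)^{-1}$; the scalar $h_0$ from the $\nu$-summation turns $\Gamma(h_0)$ into $\Gamma(1+h_0)$; and since $\Gamma(1+h_0-2h_j)=(h_0-2h_j)!$, the factors with $j\ge2$ cancel the product built into $\gamma_n$, while $(h_1-\tfrac12)!=\Gamma(h_1+\tfrac12)$. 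The net constant is $4^{h_0-1}\Gamma(1+h_0)\big/\bigl(\Gamma(h_1+\tfrac12)^2\Gamma(1+h_0-2h_1)\bigr)$, which is the formula asserted. Positivity of $r_n$ is then immediate, since on $(0,1)^s$ one has $0<t_1t_2\dotsb t_s<1$, whence the integrand is a product of positive quantities.

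For the asymptotics I would invoke the Laplace-type principle $\bigl(\idotsint_Kg(\boldsymbol t)^n h(\boldsymbol t)\,\d\boldsymbol t\bigr)^{1/n}\to\max_Kg$, valid for continuous $g\ge0$ on a compact $K$ and $h\ge0$ integrable with positive integral over a neighbourhood of the maximum locus of $g$. Writing $t_j^{h_j-1}(1-t_j)^{h_0-2h_j}=\bigl(t_j^{\eta_j}(1-t_j)^{\eta_0-2\eta_j}\bigr)^n t_j^{-1/2}$ and $(1+t_1t_2\dotsb t_s)^{-h_0}=\bigl((1+t_1t_2\dotsb t_s)^{-\eta_0}\bigr)^n(1+t_1t_2\dotsb t_s)^{-1}$, the integrand becomes $g(\boldsymbol t)^n h(\boldsymbol t)$, where $g$ is exactly the function whose maximum appears in the statement and $h(\boldsymbol t)=\prod_jt_j^{-1/2}\cdot(1-t_1t_2\dotsb t_s)(1+t_1t_2\dotsb t_s)^{-2}$ is integrable on $[0,1]^s$; since $g$ vanishes on $\partial[0,1]^s$, its maximum lies in the interior, where $h>0$. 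Hence the integral contributes the factor $\max_{[0,1]^s}g$, while the prefactor equals $4^{\eta_0n}(\eta_0n)!\big/\bigl((\eta_1n)!^2((\eta_0-2\eta_1)n)!\bigr)$ up to factors of bounded exponent, whose $n$-th root tends by Stirling's formula to $(4\eta_0)^{\eta_0}\big/\bigl(\eta_1^{2\eta_1}(\eta_0-2\eta_1)^{\eta_0-2\eta_1}\bigr)$ — the powers $n^{\eta_0n}$ in numerator and denominator cancelling. Multiplying the two limits gives the claimed value. I do not anticipate a genuine obstacle; the step most prone to error, and the one I would check most carefully, is the joint bookkeeping of the $\Gamma$-factor constants together with the closed-form evaluation of the very-well-poised part to $h_0(1+x)(1-x)^{-h_0-1}$, since a slip there would corrupt the clean prefactor $4^{h_0-1}\Gamma(1+h_0)/\bigl(\Gamma(h_1+\tfrac12)^2\Gamma(1+h_0-2h_1)\bigr)$ on which the subsequent arithmetic in Lemma~\ref{lem1} and Theorem~\ref{th1} rests; a secondary point of care is the cancellation of the polynomial-in-$n$ factors in the Stirling estimate of the prefactor, without which the $n$-th-root limit would not exist.
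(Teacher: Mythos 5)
Your derivation is correct and follows exactly the route the paper takes, except that the paper simply outsources it to \cite[Lemmas~1 and~3]{RZ03}: you expand $R_n(\nu)$ into the very-well-poised hypergeometric term, replace each ratio $(h_j)_\nu/(1+h_0-h_j)_\nu$ by a Beta integral, interchange (justified, as you note, since the termwise integrals of absolute values sum to $\sum_\nu|R_n(\nu)|<\infty$), evaluate the remaining $\nu$-series to $h_0(1-T)(1+T)^{-h_0-1}$, and finish with the Laplace principle plus Stirling. All the constant bookkeeping checks out, so nothing further is needed.
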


Computation of the latter maximum is performed in \cite[Sect.~4, Remark]{RZ03}, and the result is as follows.

\begin{lemma}
\label{lem3}
Assume that $x_0$ is a \emph{unique} zero of the polynomial
\begin{equation*}
x\prod_{j=1}^s\bigl((\eta_0-\eta_j)-\eta_jx\bigr)-\prod_{j=1}^s\bigl(\eta_j-(\eta_0-\eta_j)x\bigr)
\end{equation*}
in the interval $0<x<1$, and set
$$
x_j
=\frac{\eta_j-(\eta_0-\eta_j)x_0}{(\eta_0-\eta_j)-\eta_jx_0}
\quad\text{for}\; j=1,2,\dots,s.
$$
Then
$$
\max_{\boldsymbol t\in[0,1]^s}\frac{\prod_{j=1}^st_j^{\eta_j}(1-t_j)^{\eta_0-2\eta_j}}{(1+t_1t_2\dotsb t_s)^{\eta_0}}
=\frac{\prod_{j=1}^sx_j^{\eta_j}(1-x_j)^{\eta_0-2\eta_j}}{(1+x_1x_2\dotsb x_s)^{\eta_0}}.
$$
\end{lemma}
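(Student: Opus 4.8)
The plan is to reduce the constrained maximisation to a one-variable equation. Denote by
$$
g(\boldsymbol t)=\frac{\prod_{j=1}^st_j^{\eta_j}(1-t_j)^{\eta_0-2\eta_j}}{(1+t_1t_2\dotsb t_s)^{\eta_0}}
$$
the function under consideration. First I would observe that $g$ is continuous on the compact cube $[0,1]^s$, is strictly positive in its interior, and vanishes identically on its boundary, since every exponent $\eta_j$ is a positive integer and, because $\eta_j<\tfrac12\eta_0$ with $\eta_0$ and $\eta_j$ integral, each exponent $\eta_0-2\eta_j$ is a positive integer as well. Hence the maximum of $g$ on $[0,1]^s$ is positive and is attained at an interior point, which is consequently a critical point of $\log g$.

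Next I would write out the stationarity equations. Setting $P=t_1t_2\dotsb t_s$, the equation $\partial(\log g)/\partial t_j=0$ becomes, after multiplication by $t_j$,
$$
\eta_j-\frac{(\eta_0-2\eta_j)\,t_j}{1-t_j}-\frac{\eta_0P}{1+P}=0\qquad(j=1,\dots,s).
$$
Introducing $v=P/(1+P)$, this solves as $t_j=(\eta_j-\eta_0v)\big/\bigl((\eta_0-\eta_j)-\eta_0v\bigr)$, expressing every $t_j$ through the single common quantity $v$; since $t_j>0$ and $\eta_0-2\eta_j>0$, necessarily $\eta_j-\eta_0v>0$ for each $j$. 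Substituting back $v=P/(1+P)$ and cancelling the common factor $1+P$ from numerator and denominator, this becomes
$$
t_j=\frac{\eta_j-(\eta_0-\eta_j)P}{(\eta_0-\eta_j)-\eta_jP}\qquad(j=1,\dots,s),
$$
which is precisely the formula defining $x_j$ with $x_0$ replaced by $P$; a short check using $0<P<1$ and $0<\eta_j<\tfrac12\eta_0$ confirms the right-hand side lies in $(0,1)$.

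Finally I would close the loop. At the critical point every $t_j$ lies in $(0,1)$, so $P=\prod_jt_j\in(0,1)$; and the constraint $P=\prod_{j=1}^st_j$, rewritten through the last display and cleared of denominators, is exactly the statement that $P$ is a zero in $(0,1)$ of the polynomial displayed in the lemma. Under the hypothesis that this zero is unique we conclude $P=x_0$, hence $t_j=x_j$ for all $j$, so the interior critical point---and therefore the maximum---is the point $(x_1,\dots,x_s)$. Evaluating $g$ there and using $x_1x_2\dotsb x_s=x_0$ in the denominator yields the asserted value; this is the computation indicated in \cite[Sect.~4, Remark]{RZ03}.

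I do not expect a serious obstacle here: the argument is a standard Lagrange-type calculation. The only steps requiring a little attention are the boundary-vanishing observation, which is what forces the maximum to be interior, and the bookkeeping that collapses the $s$ stationarity equations into a single polynomial condition in $P$---once that is done, the uniqueness hypothesis immediately pins down the maximiser.
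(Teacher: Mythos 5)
Your argument is correct, and it actually supplies a proof where the paper gives none: Lemma~\ref{lem3} is stated with only a pointer to \cite[Sect.~4, Remark]{RZ03}, and your boundary-vanishing plus critical-point computation is precisely the standard calculation carried out there, so there is nothing to compare against in the text itself. The chain ``interior maximum $\Rightarrow$ critical point $\Rightarrow$ each $t_j$ expressed through $P=\prod_k t_k$ $\Rightarrow$ $P$ is a root in $(0,1)$ of the displayed polynomial $\Rightarrow$ $P=x_0$ by uniqueness $\Rightarrow$ the maximiser is $(x_1,\dots,x_s)$'' is complete and uses the hypotheses $0<\eta_j<\tfrac12\eta_0$ exactly where they are needed (positivity and integrality of the exponents $\eta_j$ and $\eta_0-2\eta_j$). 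One small inaccuracy worth deleting: the parenthetical ``short check'' that $\bigl(\eta_j-(\eta_0-\eta_j)P\bigr)\big/\bigl((\eta_0-\eta_j)-\eta_jP\bigr)$ lies in $(0,1)$ for \emph{every} $P\in(0,1)$ is false in general (for $\eta_0=31$, $\eta_j=10$ the numerator is negative once $P>\tfrac{10}{21}$), but it is also unnecessary --- at the interior maximiser the left-hand side $t_j$ is already known to lie in $(0,1)$, which is all the argument uses. Likewise the final ``using $x_1x_2\dotsb x_s=x_0$'' is not needed, since the asserted value already displays $x_1x_2\dotsb x_s$; it is simply the (true) restatement of the polynomial condition at $P=x_0$.
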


Arithmetic ingredients of the construction are in line with the strategy used in the proof of Lemma~\ref{lem1}.
For simplicity we split the corresponding statement into two parts. Define
$$
N=\min_{1\le j\le s}\{h_j-\tfrac12\}
\qquad\text{and}\qquad
M=\max\{h_0-2N-1,\,h_1-\tfrac12\},
$$
and notice that the poles of the rational function \eqref{rat2} are located at the points $t=-k-\frac12$ for integers
$k$ in the range $N\le k\le h_0-N-1$.

\begin{lemma}
\label{lem4}
The coefficients in the partial-fraction decomposition
$$
R_n(t)=\sum_{i=1}^s\sum_{k=N}^{h_0-N-1}\frac{a_{i,k}}{(t+k+\tfrac12)^i}
$$
of \eqref{rat2} satisfy
\begin{equation}
a_{i,k}=(-1)^ia_{i,h_0-1-k}
\label{vanish2}
\end{equation}
and
\begin{equation}
\Phi_n^{-1}d_M^{s-i}a_{i,k}\in\mathbb Z
\label{arith2}
\end{equation}
for $i=1,\dots,s$ and $N\le k\le h_0-N-1$, where the product over primes
\begin{equation*}
\Phi_n=\prod_{\sqrt{2h_0}<p\le M}p^{\varphi_0(n/p)}
\end{equation*}
is defined through the $1$-periodic functions
$$
\varphi_0(x)=\min_{0\le y<1}\varphi(x,y)
$$
and
\begin{align*}
\varphi(x,y)
&=\lf2(\eta_0x-y)\rf+\lf2y\rf-\lf\eta_0x-y\rf-\lf y\rf-2\lf\eta_1x\rf-\lf(\eta_0-2\eta_1)x\rf
\\ &\quad
+\sum_{j=1}^s\bigl(\lf(\eta_0-2\eta_j)x\rf-\lf y-\eta_jx\rf-\lf(\eta_0-\eta_j)x-y\rf\bigr).
\end{align*}
\end{lemma}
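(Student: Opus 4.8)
The plan is to repeat, for an arbitrary $\Beta$ as above, the proof of Lemma~\ref{lem1}. The symmetry \eqref{vanish2} comes for free from the self-reciprocity $R_n(t)=R_n(-t-h_0)$ recorded in \eqref{rat2}: under $t\mapsto-t-h_0$ the pole $t=-k-\tfrac12$ is sent to $t=-(h_0-1-k)-\tfrac12$ and the summand $a_{i,k}\,(t+k+\tfrac12)^{-i}$ acquires the factor $(-1)^i$, so comparing the two partial-fraction expansions term by term gives $a_{i,k}=(-1)^i a_{i,h_0-1-k}$.

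For \eqref{arith2} I would first establish the weaker inclusion $d_M^{s-i}a_{i,k}\in\mathbb Z$, exactly as in the proof of Lemma~\ref{lem1}: write \eqref{rat2} as a product of elementary rational functions — ratios of half-integer-shifted Pochhammer symbols, together with the linear factor $2t+h_0$ — each of which, by \cite[Lemma~1]{Zu18}, admits a partial-fraction expansion over the poles $t=-k-\tfrac12$ whose numerators become integral after multiplication by a suitable power of $d_M$ (the quantity $M$ is defined precisely so that this applies), and then convolve. This already settles \eqref{arith2} for the primes $p\le\sqrt{2h_0}$ and, trivially, for $p>M$, so it remains to produce the extra cancellation by $\Phi_n$ in the range $\sqrt{2h_0}<p\le M$, where $\ord_pd_M=1$.

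In that range I would compute the leading coefficient explicitly. Taking the residue of \eqref{rat2} at $t=-k-\tfrac12$ and rewriting the half-integer Pochhammer symbols as ordinary factorials — the powers of $4$ so produced cancelling against $\gamma_n$ — yields
$$
a_{s,k}=\pm(\eta_0n-2k)\,\frac{(2k)!\,(2\eta_0n-2k)!}{k!\,(\eta_0n-k)!\,(\eta_1n)!^2}\cdot\frac{\prod_{j=2}^s\bigl((\eta_0-2\eta_j)n\bigr)!}{\prod_{j=1}^s(k-\eta_jn)!\,\bigl((\eta_0-\eta_j)n-k\bigr)!},
$$
a product of integers whose sign depends only on $n$ and $\Beta$. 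Since a prime $p>\sqrt{2h_0}$ satisfies $p^2>2h_0$ and therefore exceeds every factorial argument above, Legendre's formula gives $\ord_pa_{s,k}=\varphi(n/p,k/p)+\ord_p(\eta_0n-2k)\ge\varphi(n/p,k/p)\ge\varphi_0(n/p)$, the last inequality by the $1$-periodicity of $\varphi(x,\cdot)$ and the definition of $\varphi_0$. A routine analysis of the Leibniz expansion of $\bigl((t+k+\tfrac12)^sR_n(t)\bigr)^{(s-i)}$ at $t=-k-\tfrac12$, along the lines of \cite[Section~7]{RZ03} — each differentiation introducing only the reciprocal of a nonzero (half-)integer of $p$-adic order at most $1$, and the resulting combinatorial factors being $p$-adic units in the relevant range — then gives $\ord_pa_{i,k}\ge-(s-i)+\ord_pa_{s,k}\ge-(s-i)+\varphi_0(n/p)$. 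Together with the previous step this yields $\ord_p\bigl(\Phi_n^{-1}d_M^{s-i}a_{i,k}\bigr)\ge0$ for every prime $p$, which is \eqref{arith2}.

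The main obstacle lies in the last paragraph: deriving the closed form of $a_{s,k}$ for a general $\Beta$ — carefully converting every half-integer Pochhammer symbol produced by the residue into a ratio of ordinary factorials and tracking the powers of $2$ — and then verifying that its $p$-adic valuation is literally the periodic function $\varphi(n/p,k/p)$ displayed in the statement; in particular one must check that the asymmetric appearance of the index $j=1$ in $\gamma_n$ is compensated exactly by the isolated term $-\lf(\eta_0-2\eta_1)x\rf$ in the definition of $\varphi$. These verifications are straightforward but laborious, and may be modelled on \cite[Sections~4 and~7]{RZ03}.
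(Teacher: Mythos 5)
Your proposal is correct and follows essentially the same route as the paper's proof: the symmetry of $R_n$ for \eqref{vanish2}, the decomposition of $R_n$ into elementary integer-valued and rational factors (convolved via the Leibniz rule) for the inclusions $d_M^{s-i}a_{i,k}\in\mathbb Z$, and the explicit factorial formula for $a_{s,k}$ analysed $p$-adically by Legendre's formula for the extra cancellation by $\Phi_n$ are precisely the ingredients the paper invokes through \cite[Lemmas 4, 5, 10, 11]{RZ03} and \cite[Section~7]{RZ03}. Your closed form for $a_{s,k}$ and the resulting valuation do reproduce the stated $\varphi(x,y)$, including the compensation of the $j=1$ asymmetry in $\gamma_n$ by the isolated term $-\lf(\eta_0-2\eta_1)x\rf$.
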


\begin{proof}
For this, we write the function $R_n(t-\tfrac12)$ as the product of $2t+h_0-1$, the three integer-valued polynomials
$$
\frac{4^{h_1^*}(t+\frac12)_{h_1^*}}{h_1^*!}, \quad
\frac{4^{h_0-2h_1}(t+h_1^*+\frac12)_{h_0-2h_1}}{(h_0-2h_1)!}, \quad
\frac{4^{h_1^*}(t+h_0-h_1^*-\frac12)_{h_1^*}}{h_1^*!},
$$
where $h_1^*=h_1-\frac12=\eta_1n$, and the rational functions
$$
\frac{(h_0-2h_j)!}{(t+h_j-\frac12)_{1+h_0-2h_j}}
\quad\text{for}\; j=1,\dots,s.
$$
Then \cite[Lemmas 4, 5, 10, 11]{RZ03} and the Leibniz rule for differentiating a product
imply the inclusions $d_M^{s-i}a_{i,k}\in\mathbb Z$ and estimates
$$
\ord_pa_{i,k}\ge-(s-i)+\varphi\Bigl(\frac np,\frac kp\Bigr)
$$
for the $p$-adic order of the coefficients. These are combined to conclude with \eqref{arith2}.

The property \eqref{vanish2} follows from the symmetry of the rational function \eqref{rat2}.
\end{proof}

\begin{lemma}
\label{lem5}
The decomposition
\begin{equation}
r_n=\sum_{\substack{i=1\\ i\;\text{even}}}^sa_i\beta(i)+a_0
\in\mathbb Q\beta(2)+\mathbb Q\beta(4)+\dots+\mathbb Q\beta(s-1)+\mathbb Q
\label{dec2}
\end{equation}
takes place, where $\Phi_n^{-1}d_M^{s-i}a_i\in\mathbb Z$ for $i=0,1,\dots,s$ even,
and $\Phi_n$ is defined in Lemma~\textup{\ref{lem4}}.
\end{lemma}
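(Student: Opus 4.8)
The plan is to mimic the argument already carried out in the proof of Lemma~\ref{lem1}, now in the general setting. First I would use the partial-fraction decomposition of Lemma~\ref{lem4} to write
$$
r_n=\sum_{\nu=0}^\infty R_n(\nu)(-1)^\nu
=\sum_{i=1}^s\sum_{k=N}^{h_0-N-1}a_{i,k}\sum_{\nu=0}^\infty\frac{(-1)^\nu}{(\nu+k+\tfrac12)^i}.
$$
Since $h_0-1=\eta_0n$ is even, write $h_0-1=2m$; then, exactly as in Section~\ref{s2}, I would extend the $\nu$-summation to start from $-m$ using that $R_n(t)$ vanishes at the half-integers outside the pole range (the factor $2t+h_0$ makes $R_n(-m-\tfrac12)=0$ as well), shift the inner summation index to $\ell=\nu+k-m$, and split the resulting tails according to whether $k>m$ or $k<m$.

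Next I would collect the coefficient of each $\beta(i)=\sum_{\ell\ge0}(-1)^\ell(\ell+\tfrac12)^{-i}\cdot2^{-i}$, obtaining
$$
a_i=2^i\sum_{k=N}^{h_0-N-1}(-1)^{k-m}a_{i,k},
$$
so that \eqref{arith2} immediately gives $\Phi_n^{-1}d_M^{s-i}a_i\in\mathbb Z$, while the symmetry \eqref{vanish2} together with the substitution $k\mapsto h_0-1-k$ forces $a_i=(-1)^ia_i$, hence $a_i=0$ for odd $i$. The remaining finite double sums over the truncated geometric-type tails constitute $a_0$; here I would invoke the elementary fact that $d_{M}^{\,i}\sum_{\ell=0}^{L-1}(-1)^\ell(\ell+\tfrac12)^{-i}\in2^i\mathbb Z$ for $0\le L\le M$ (the denominators $2\ell+1$ run through odd integers at most $2M-1\le 2M$), combined once more with \eqref{arith2}, to conclude $\Phi_n^{-1}d_M^sa_0\in\mathbb Z$; the $\beta(i)$ with odd $i$ that nominally appear have vanishing coefficients by the same symmetry computation, so \eqref{dec2} holds as stated.

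The one point that needs genuine care is the bookkeeping with $M$ versus $N$: the tails $\sum_{\ell}(-1)^\ell(\ell+\tfrac12)^{-i}$ that enter $a_0$ have length at most $\max\{m-N,\,h_0-N-1-m\}$, and one must check that the largest odd denominator occurring is bounded by $2M$, i.e.\ that $M=\max\{h_0-2N-1,h_1-\tfrac12\}$ as defined in the excerpt is indeed the correct normalising parameter; this is where the definitions of $N$ and $M$ are tailored precisely so that $d_M$ absorbs all arising denominators. Everything else is a routine transcription of the $s=17$ case already written out in Section~\ref{s2}, so I do not anticipate any serious obstacle beyond this index-range verification.
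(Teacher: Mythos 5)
Your proposal follows the paper's own proof essentially step for step: shift the summation leftwards using the vanishing of $R_n$, expand via the partial fractions of Lemma~\ref{lem4}, read off $a_i=2^i\sum_k(-1)^ka_{i,k}$ so that \eqref{arith2} gives the integrality for $i\ge1$ and the symmetry \eqref{vanish2} kills the odd $i$, and absorb the finite tails into $a_0$. Two points need correcting. First, in the general setting the sum runs over \emph{integers} $\nu$, and the terms you adjoin at $\nu=-1,\dots,-(h_0-1)/2$ vanish because of the factor $(t+1)_{h_0-1}$; the poles of $R_n$ sit at half-integers, so ``vanishes at the half-integers'' is the wrong justification (also, the substitution should be $\ell=\nu+k$, producing the sign $(-1)^k$ rather than $(-1)^{k-m}$; your $a_i$ are off by the global sign $(-1)^m$, which is harmless for the integrality and parity conclusions but means your displayed identity for $r_n$ would fail for odd $m$).

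Second, and more substantively, the ``elementary fact'' you invoke for $a_0$ is false as stated: $d_M^{\,i}\sum_{\ell=0}^{L-1}(-1)^\ell(\ell+\tfrac12)^{-i}$ need not be an integer for all $L\le M$, because the odd denominators $2\ell+1$ run up to $2L-1$, which can exceed $M$ (take $2L-1$ a prime larger than $M$). The correct criterion is that the largest odd denominator be at most $M$, not at most $2M$ as you write --- $d_M$ contains no factors beyond $M$. This is exactly the verification you defer, so it must be carried out: after centering the sum at $\nu=-(h_0-1)/2$, each tail has at most $(h_0-1)/2-N$ terms, hence its largest odd denominator is $2\bigl((h_0-1)/2-N\bigr)-1=h_0-2N-2\le M-1$, since $M\ge h_0-2N-1$ by definition. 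With that bound in place $d_M^{\,i}$ clears every tail, \eqref{arith2} supplies the factor $\Phi_n^{-1}d_M^{\,s-i}$ for the accompanying $a_{i,k}$, and the proof closes exactly as in the paper.
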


\begin{proof}
Since the function \eqref{rat2} vanishes at $t=-1,-2,\dots,-h_0+2$, we can shift the summation in \eqref{eq:5}:
$$
r_n
=\sum_{\nu=-(h_0-1)/2}^\infty R_n(\nu)(-1)^\nu
=\sum_{i=1}^s\sum_{k=N}^{h_0-N-1}(-1)^ka_{i,k}\sum_{\nu=-(h_0-1)/2}^\infty\frac{(-1)^{\nu+k}}{(\nu+k+\tfrac12)^i}.
$$
Now proceeding as in the proof of Lemma~\ref{lem1} we arrive at the desired decomposition \eqref{dec2} with
\begin{align*}
a_0
&=\sum_{i=1}^s\sum_{k=N}^{(h_0-3)/2}(-1)^ka_{i,k}\sum_{\ell=k-(h_0-1)/2}^{-1}\frac{(-1)^\ell}{(\ell+\tfrac12)^i}
\\ &\qquad
-\sum_{i=1}^s\sum_{k=(h_0+1)/2}^{h_0-N-1}(-1)^ka_{i,k}\sum_{\ell=0}^{k-(h_0+1)/2}\frac{(-1)^\ell}{(\ell+\tfrac12)^i}
\end{align*}
and
$$
a_i=2^i\sum_{k=N}^{h_0-N-1}(-1)^ka_{i,k},
$$
with $a_i$ vanishing for $i$ odd in view of the property \eqref{vanish2}. The inclusions for the coefficients in \eqref{dec2}
therefore follow from Lemma~\ref{lem4} and
\begin{align*}
d_{h_0-2N-2}^i\sum_{\ell=k-(h_0-1)/2}^{-1}\frac{(-1)^\ell}{(\ell+\tfrac12)^i}&\in\mathbb Z
\quad\text{if}\;\; N\le k\le\frac{h_0-3}2,
\\
d_{h_0-2N-2}^i\sum_{\ell=0}^{k-(h_0+1)/2}\frac{(-1)^\ell}{(\ell+\tfrac12)^i}&\in\mathbb Z
\quad\text{if}\;\; \frac{h_0+1}2\le k\le h_0-N-1.
\qedhere
\end{align*}
\end{proof}

\begin{proof}[Proof of Theorem~\textup{\ref{th1}}]
Take $s=13$ and
$$
(\eta_0,\eta_1,\dots,\eta_{13})=(31,10,10,10,10,10,11,11,11,11,12,12,12,12),
$$
hence $M=11n$. Then
$$
\lim_{n\to\infty}r_n^{1/n}=\exp(-100.73966317\dotsb)
$$
and
$$
\varphi_0(x)=\begin{cases}
8  & \text{if}\; \{x\}\in\bigl[\frac7{24},\frac3{10}\bigr), \\
7  & \text{if}\; \{x\}\in\bigl[\frac3{31},\frac1{10}\bigr)
\cup\bigl[\frac6{31},\frac15\bigr)
\cup\bigl[\frac9{31},\frac7{24}\bigr)
\cup\bigl[\frac{19}{24},\frac45\bigr)
\cup\bigl[\frac89,\frac9{10}\bigr), \\
6  & \text{if}\; \{x\}\in\bigl[\frac1{11},\frac3{31}\bigr)
\cup\bigl[\frac2{11},\frac6{31}\bigr)
\cup\bigl[\frac3{11},\frac9{31}\bigr)
\cup\bigl[\frac7{20},\frac25\bigr)
\cup\bigl[\frac9{20},\frac12\bigr)
\\ &\qquad\qquad
\cup\bigl[\frac{11}{20},\frac35\bigr)
\cup\bigl[\frac{13}{20},\frac7{10}\bigr)
\cup\bigl[\frac34,\frac{19}{24}\bigr)
\cup\bigl[\frac67,\frac89\bigr), \\
5  & \text{if}\; \{x\}\in\bigl[\frac7{31},\frac14\bigr)
\cup\bigl[\frac7{22},\frac7{20}\bigr)
\cup\bigl[\frac37,\frac9{20}\bigr)
\cup\bigl[\frac{17}{31},\frac{11}{20}\bigr)
\cup\bigl[\frac{19}{31},\frac58\bigr)
\\ &\qquad\qquad
\cup\bigl[\frac{20}{31},\frac{13}{20}\bigr)
\cup\bigl[\frac57,\frac8{11}\bigr)
\cup\bigl[\frac{23}{31},\frac34\bigr), \\
4  & \text{if}\; \{x\}\in\bigl[\frac1{10},\frac18\bigr)
\cup\bigl[\frac15,\frac7{31}\bigr)
\cup\bigl[\frac3{10},\frac7{22}\bigr)
\cup\bigl[\frac6{11},\frac{17}{31}\bigr)
\cup\bigl[\frac35,\frac{19}{31}\bigr)
\\ &\qquad\qquad
\cup\bigl[\frac7{11},\frac{20}{31}\bigr)
\cup\bigl[\frac8{11},\frac{23}{31}\bigr), \\
2  & \text{if}\; \{x\}\in\bigl[\frac1{24},\frac1{11}\bigr)
\cup\bigl[\frac3{22},\frac2{11}\bigr)
\cup\bigl[\frac14,\frac3{11}\bigr)
\cup\bigl[\frac{13}{24},\frac6{11}\bigr)
\cup\bigl[\frac58,\frac7{11}\bigr)
\\ &\qquad\qquad
\cup\bigl[\frac{17}{20},\frac67\bigr)
\cup\bigl[\frac{19}{20},1\bigr), \\
1  & \text{if}\; \{x\}\in\bigl[\frac1{31},\frac1{24}\bigr)
\cup\bigl[\frac18,\frac3{22}\bigr)
\cup\bigl[\frac9{22},\frac37\bigr)
\cup\bigl[\frac12,\frac{13}{24}\bigr)
\cup\bigl[\frac7{10},\frac57\bigr)
\\ &\qquad\qquad
\cup\bigl[\frac45,\frac9{11}\bigr)
\cup\bigl[\frac{26}{31},\frac{17}{20}\bigr)
\cup\bigl[\frac9{10},\frac{10}{11}\bigr)
\cup\bigl[\frac{29}{31},\frac{19}{20}\bigr), \\
0 & \text{otherwise},
\end{cases}
$$
so that
$$
\lim_{n\to\infty}(\Phi_n^{-1}d_M^{13})^{1/n}=\exp(100.23354349\dots).
$$
This means that the positive linear forms
$$
\Phi_n^{-1}d_M^{13}r_n\in\mathbb Z\beta(2)+\mathbb Z\beta(4)+\dots+\mathbb Z\beta(12)+\mathbb Z
$$
tend to 0 as $n\to\infty$. Thus, at least one of the even beta values in consideration must be irrational.
\end{proof}



\begin{thebibliography}{9}

\bibitem{Fi18}
\textsc{S.~Fischler},
Irrationality of values of $L$-functions of Dirichlet characters,
\emph{Preprint} \href{http://arxiv.org/abs/1904.02402}{\texttt{arXiv:\,1904.02402 [math.NT]}} (2019).

\bibitem{FSZ18}
\textsc{S. Fischler}, \textsc{J. Sprang} and \textsc{W. Zudilin},
Many odd zeta values are irrational,
\emph{Compositio Math.} \textbf{155}:5 (2019), 938--952.

\bibitem{KZ18}
\textsc{C. Krattenthaler} and \textsc{W. Zudilin},
Hypergeometry inspired by irrationality questions,
\emph{Kyushu J. Math.} \textbf{73}:1 (2019), 189--203.

\bibitem{RZ03}
\textsc{T.~Rivoal} and \textsc{W.~Zudilin},
Diophantine properties of numbers related to Catalan's constant,
\emph{Math. Ann.} \textbf{326}:4 (2003), 705--721.

\bibitem{RZ18}
\textsc{T.~Rivoal} and \textsc{W.~Zudilin},
A note on odd zeta values,
\emph{Preprint} \href{http://arxiv.org/abs/1803.03160}{\texttt{arXiv:\,1803.03160 [math.NT]}} (2018).

\bibitem{Sp18}
\textsc{J. Sprang},
Infinitely many odd zeta values are irrational. By elementary means, 
\emph{Preprint} \href{http://arxiv.org/abs/1802.09410}{\texttt{arXiv:\,1802.09410 [math.NT]}} (2018).

\bibitem{Zu04}
\textsc{W.~Zudilin},
Arithmetic of linear forms involving odd zeta values,
\emph{J. Th\'eor. Nombres Bordeaux} \textbf{16}:1 (2004), 251--291.

\bibitem{Zu18}
\textsc{W. Zudilin},
One of the odd zeta values from $\zeta(5)$ to $\zeta(25)$ is irrational. By elementary means,
\emph{SIGMA} \textbf{14} (2018),  no.~028, 8 pages.

\end{thebibliography}
\end{document}